\documentclass[11pt]{article}
\usepackage[margin=1.1in]{geometry}
\usepackage{amsmath,amssymb,amsthm}
\usepackage{booktabs}
\usepackage[colorlinks=true,linkcolor=blue,citecolor=blue,urlcolor=blue]{hyperref}

\newtheorem{theorem}{Theorem}[section]
\newtheorem{lemma}[theorem]{Lemma}

\theoremstyle{remark}
\newtheorem{remark}[theorem]{Remark}

\newcommand{\LP}{\mathrm{LP}}

\newcommand{\R}{\mathbb{R}}

\title{A dual linear programming bound for sphere packing\\ in dimension 36}
\author{Rifat Jumagulov\\ \texttt{jum.rifm@gmail.com}}
\date{July 13, 2026 (revised July 15, 2026)}

\begin{document}
\maketitle

\begin{abstract}
We construct an explicit dual-feasible point for the Cohn--Elkies linear
program in dimension $36$, built from the space of weight-$18$ modular
forms for $\Gamma_0(24)$ following the method of Cohn and Triantafillou.
The certificate shows that the two-point linear programming bound on the
sphere packing density in dimension $36$ exceeds the density of the best
packing currently known --- the Kschischang--Pasupathy packing, of center
density $2^{18}/3^{10}$ --- by a factor of at least $32.91$. In
particular, no Cohn--Elkies auxiliary function can certify the best known
packing in dimension $36$ as optimal. To our knowledge this is the first
such dual bound in any dimension above $32$, extending the table of
Cohn--Triantafillou ($d=12,16,20,28,32$), Li ($3\le d\le 13$), and
de~Courcy-Ireland--Dostert--Viazovska ($d=6$). The certificate is
rigorous and machine-checkable, with exact rational data and certified
interval bounds: the dual point is a rational vector, coefficient
nonnegativity is verified by exact arithmetic up to $n=800$, and
eventual positivity of
the two relevant $q$-expansions is proved via an explicit Deligne-type
tail bound whose constant is certified with outward-rounded interval
arithmetic. Two methodological points may be of independent interest: a
constraint-generation (cutting-plane) formulation of the exact rational
LP, which is what makes an exact vertex whose Eisenstein data supports
the tail argument reachable; and a sharpened, lift-aware form of the
Deligne bookkeeping constant that discounts deep oldform lifts, without
which the finite verification in dimension $36$ fails (the crossover
moves past the verified window).
\end{abstract}

\section{Introduction}\label{sec:intro}

The Cohn--Elkies linear programming bound \cite{CohnElkies} is the
strongest known general \emph{two-point} upper bound on sphere packing
densities in moderate dimensions, and it is famously sharp in dimensions $8$ and $24$
\cite{Viazovska,CKMRV} (and trivially sharp in dimension $1$). In
all other dimensions its sharpness is unresolved, except that in
dimensions $3,4,5,6,12,16$ it is provably \emph{not} sharp (see below);
the question splits into two inequivalent problems. Write
$\delta_{\LP}(d)$ for the optimum of the Cohn--Elkies program
(normalized as a center density in the sense of \cite{ConwaySloane})
and $\delta_*(d)$ for the best packing density currently known;
$\delta_{\max}(d)$ is the true optimal density, so that
$\delta_{\LP}(d)\ge\delta_{\max}(d)\ge\delta_*(d)$.

\begin{itemize}
\item A \emph{dual bound} is a certified lower bound
  $\delta_{\LP}(d)\ge L$. If $L>\delta_*(d)$, then no Cohn--Elkies
  function can prove the optimality of the best known packing: either
  the LP bound is not sharp, or a denser packing than currently known
  exists. Cohn and Triantafillou \cite{CT} constructed such points from
  modular forms in dimensions $12,16,20,28,32$; Li \cite{Li} gave a
  different (discrete-reduction) construction covering $3\le d\le 13$;
  de~Courcy-Ireland, Dostert and Viazovska \cite{dCIDV} extended the
  modular-form method to dimension $6$.
\item \emph{Strict non-sharpness} is the stronger statement
  $\delta_{\LP}(d)>\delta_{\max}(d)$, where $\delta_{\max}$ is the true
  optimal density. This requires, in addition to a dual lower bound
  $L\le\delta_{\LP}(d)$, an \emph{independent} upper bound
  $U\ge\delta_{\max}(d)$ with $U<L$. In the published literature this is established only for
  $d\in\{3,4,5,6,12,16\}$: for $d=12,16$ by combining the
  Cohn--Triantafillou dual points with the three-point bounds of
  Cohn--de~Laat--Salmon \cite{CdLS}, and for low dimensions by Li's dual
  bounds against classical upper bounds \cite{Li,dCIDV}.
\end{itemize}

This paper proves a statement of the first kind in dimension $36$, the
first above $32$.\footnote{Novelty scope and search date: we searched
(2026-07-11, re-checked 2026-07-14) for certified dual lower bounds on
$\delta_{\LP}$ of any kind --- Cohn--Elkies-type dual-feasible
distributions/modular-form points as in \cite{CT,dCIDV}, discrete
reductions as in \cite{Li} --- and for LP non-sharpness claims by any
method, in dimensions $>32$; principal sources: citation graphs of
\cite{CT} (OpenAlex/Crossref), arXiv full-text sweeps 2019--2026, and
Cohn's sphere-packing bounds tables and publication list. See
\S\ref{sec:discussion} for the two
near-misses (\cite{Zhou}, \cite{AJCHdLT}).}

\begin{theorem}\label{thm:main}
$\delta_{\LP}(36)\;\ge\;B\;=\;32.91044\ldots\times
\dfrac{2^{18}}{3^{10}}\;>\;146.1036,$
where $B$ is the explicit rational number given in \eqref{eq:exactB},
with outward decimal enclosures
$B\in[146.1036734821,\,146.1036734822]$ and
$B/(2^{18}/3^{10})\in[32.91044546,\,32.91044547]$.
In particular the Cohn--Elkies linear programming bound in dimension
$36$ cannot certify the Kschischang--Pasupathy packing
\cite{KP}, of center density $2^{18}/3^{10}=4.4394\ldots$, as an optimal
sphere packing.
\end{theorem}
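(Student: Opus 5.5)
We follow the Cohn--Triantafillou dual framework. The dual of the Cohn--Elkies program asks for a measure $\mu=\delta_0+\sum_{r>0}c_r\,\delta_{r}$ on radii. The weights $c_r$ must be nonnegative, supported on $r\ge 1$ (after normalizing the minimal distance), and such that $\widehat{\mu}$, the Fourier transform of the corresponding radial distribution on $\R^{36}$, is a positive measure. Any such $\mu$ yields $\delta_{\LP}(36)\ge \widehat{\mu}(\{0\})/2^{36}$ times the appropriate volume normalization. This is weak duality: for any admissible $f$ one has $f(0)\ge \sum c_r f(r)+f(0)\ge \widehat f(0)\,\widehat\mu(0)$ after pairing. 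So the plan is to \emph{exhibit} such a $\mu$ explicitly and verify its constraints rigorously.

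Following \cite{CT}, we take $\mu$ supported on radii $\sqrt{2n/k}$ and encode it by a pair of $q$-series. The first is $F(z)=\sum_n a_n q^{n}$, a weight-$18$ modular form for $\Gamma_0(24)$ whose coefficients are the weights $c_r$ (with $a_0=1$). The second is its image $\widetilde F$ under the Fricke/Atkin--Lehner involution, whose coefficients are the Fourier-side weights by Poisson summation for theta-type series in dimension $36=2\cdot 18$. The constraints become: $a_n=0$ for the finitely many $n$ corresponding to radii below $1$; $a_n\ge 0$ and $\widetilde a_n\ge 0$ for all $n$. The bound $B$ is then read off from $\widetilde a_0$, up to the volume factor. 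We therefore fix a basis of $M_{18}(\Gamma_0(24))$ with exact rational $q$-expansions (Eisenstein series at each cusp plus newforms and oldform lifts $g(dz)$). We then solve a rational LP: maximize $\widetilde a_0$ subject to the vanishing conditions and to $a_n,\widetilde a_n\ge0$ for $n\le N$. This is done by constraint generation, adding violated inequalities iteratively, and ends at an exact rational vertex. That vertex determines $B$ in \eqref{eq:exactB}, and the enclosures in the statement follow by exact arithmetic.

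Verification then splits into two parts. First, for $n\le 800$ we check $a_n\ge0$ and $\widetilde a_n\ge0$ by exact rational arithmetic. Second, for $n>800$ we decompose each form as Eisenstein part plus cusp part. The Eisenstein coefficients are explicit divisor-type sums bounded below by $c_E\, n^{17}$ with an effective $c_E>0$, provided the chosen vertex has positive Eisenstein leading constant. This is the reason for steering the LP toward such a vertex. The cusp coefficients are bounded by Deligne: $|b_n|\le C\,\sigma_0(n)\,n^{17/2}$, where $C$ is summed over newform components and their lifts $g(dz)$. A lift by $d$ contributes only at $n$ divisible by $d$, with coefficient index $n/d$, so its weight can be discounted by $d^{-17/2}$. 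Using $\sigma_0(n)\le c_\epsilon n^{\epsilon}$ in explicit form, positivity holds once $c_E n^{17}>C\sigma_0(n)n^{17/2}$, which gives an explicit crossover $n_0$. We compute $C$ with outward-rounded interval arithmetic from Petersson-normalized newform coefficient expansions.

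The main obstacle is making $n_0\le 800$. A naive Deligne constant gives a crossover beyond the window. We would first apply the lift-aware discount just described. If the margin is still insufficient, we would replace the crude divisor bound by one tailored to $n>800$ and exploit the $n\mid d$ support restriction more finely. If even that fails, we would enlarge the finite exact window.
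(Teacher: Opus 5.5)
Your proposal is correct and is essentially the paper's argument: the Cohn--Triantafillou dual built from a weight-$18$ form on $\Gamma_0(24)$ and its Fricke transform, an exact rational LP solved by constraint generation, and exact sign checks for $n\le 800$. The tail bound also matches, on both $g$ and $\tilde g$: a per-class (over $\gcd(n,24)$) Eisenstein floor $c_E n^{17}$ against exactly the lift-aware Deligne constant $\sum_{f,e}|\lambda_{f,e}|\,e^{-17/2}$, certified by interval arithmetic and closed with $\sigma_0(n)\le 2\sqrt n$ (crossovers $63$ and $25$).

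The remaining differences are bookkeeping. The paper searches an explicit $29$-dimensional span of Eisenstein series and eta-quotients with cutoff $T=10$, and the particular $B$ of \eqref{eq:exactB} is the value of that vertex; an LP over the full space in a newform basis would certify its own value, which you would then compare with $B$. Also, Deligne's bound applies to newforms normalized by $a_f(1)=1$, not Petersson-normalized ones.
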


The witness is an explicit dual-feasible pair of modular forms
$(g,\tilde g)$ of weight $18$ for $\Gamma_0(24)$, with $\tilde g$ the
Fricke transform of $g$; the required nonnegativity of the two
$q$-expansions is verified exactly for $1\le n\le 800$ and proved for
all $n>n_0$ with an explicit crossover $n_0\le 63$, closing the gap.

Dimension $36$ is a natural target: the weight $k=d/2=18$ is even, so no
character twists are needed, and $36$ is the next multiple of $4$ beyond the
Cohn--Triantafillou table $\{12,16,20,28,32\}$. It is also qualitatively different from the dimensions
$\le 32$: the ratio $\delta_{\LP}/\delta_*$ is much larger (our
certificate alone gives $32.9$; the best numerical two-point upper
bound is $\approx 52\times\delta_*$), reflecting how weak the record packing is
relative to the LP bound in this range. Ironically this makes the
\emph{dual} statement stronger and the \emph{strict} question harder;
see \S\ref{sec:discussion}.

\paragraph{Why the construction is not routine.}
Running the Cohn--Triantafillou method at $d=36$ meets two obstacles
that do not occur (or are mild) at $d\le 32$.

First, the na\"ive LP relaxation (maximize the value subject to
nonnegativity of finitely many coefficients) produces optima whose
$q$-expansions turn negative just past the enforced range, on the
residue classes $n\equiv 3,9,15,21\ \  (\mathrm{mod}\ 24)$; while the
fully constrained problem (nonnegativity through the eventual-positivity
crossover, hundreds of rows with coefficients of size $\sim n^{17}$) is
numerically singular for floating-point solvers and prohibitively large
for exact dense simplex. We resolve this by \emph{constraint
generation}: solve the small relaxation exactly in rational arithmetic,
evaluate all violated rows exactly, add them, and re-solve. In dimension
$36$ this converges in a single round (all violations lie in the four
classes above and are eliminated together), producing an exact rational
vertex --- within an explicitly described $29$-dimensional search
subspace, see \S\ref{sec:space} --- whose $q$-expansions are nonnegative
far beyond the enforced range (\S\ref{sec:construction}).

Second, the eventual-positivity verification of \cite[\S5]{CT} requires
an explicit constant $C$ with $|c_n|\le C\,\sigma_0(n)\,n^{(k-1)/2}$ for
the cuspidal parts of \emph{both} $g$ and $\tilde g$. The natural
constant --- the $\ell^1$-norm $\sum|\lambda_{f,e}|$ of the coordinates
in the newform-lift basis --- turns out to be \emph{lossy} in a way that
matters here: our certificate's cuspidal part is dominated ($99.7\%$ of
the norm) by a single deep oldform lift, the $e=24$ shift of the level-one
newform, and with the na\"ive constant the crossover exceeds the
verified window (uniform crossover $\approx1372$; even the divisor-free
lower bound is $\approx827>800$, see \S\ref{sec:tail}). The lift-aware constant
$C_w=\sum_{f,e}|\lambda_{f,e}|\,e^{-(k-1)/2}$
(Lemma~\ref{lem:liftaware}), equally rigorous and elementary, is smaller
here by ten orders of magnitude and closes the verification with a
margin of $7\times 10^8$ (\S\ref{sec:tail}).

A further point, absent from dimensions where the certificate has a
sparse structure, is that \emph{both} cusps genuinely matter: not only
the Fricke transform $\tilde g$ but also $g$ itself has a razor-thin
Eisenstein component ($e_1(g)\approx -3.9\times10^{-11}$), so the
$a_n\ge 0$ side needs the same Deligne-type argument as the
$b_n\ge 0$ side, with its own constants (\S\ref{sec:tail}).

\paragraph{Rigor.} The dual vertex is an exact rational vector (the
entries have $\approx 128$-digit numerators). All coefficient scans up
to $n=800$ (and, in a second implementation, to $n=1100$) are exact.
The newform decompositions of both cuspidal parts are validated by an
exact identity: the reconstruction of the $q$-expansion from the
computed coordinates agrees with the exact coefficients \emph{as
rational numbers} (deviation identically $0$) up to $n=800$. The tail
constants $C_w$ are certified as upper bounds using outward-rounded
interval arithmetic, with the algebraic Hecke eigenvalues enclosed by
exact Sturm isolating intervals. Independent implementations (exact
characteristic-polynomial factorization vs.\ high-precision simultaneous
diagonalization) produce identical constants. All code, receipts, and
the exact certificate data are provided as ancillary files (see the
manifest in \texttt{ancillary/MANIFEST.md}); a single driver,
\texttt{ancillary/code/verify\_certificate.py}, re-runs every
exact-data gate of the verification from the published data --- taking
the separately certified $C_w$ upper bounds as input; their interval
certification is re-run by \texttt{d36\_cs\_certificate.py} --- and
prints \texttt{VERIFIED} only if every gate passes.
Numerical-display convention, used throughout: every displayed decimal
is either marked \emph{exact}, or is a truncation (rounded toward zero)
of an exact rational, or an outward-rounded enclosure/upper bound
(so for the constants $C_w$ the displays are certified upper bounds),
or --- only in the discussion \S\ref{sec:discussion} --- an explicitly
labeled numerical estimate.

\section{The Cohn--Elkies program and its modular-form dual}
\label{sec:dual}

The engine is the following observation of Cohn and Triantafillou
\cite[Prop.~2.1]{CT}: if $\mu$ is a tempered distribution on $\R^d$
with $\mu=\delta_0+\nu$, $\nu\ge0$,
$\operatorname{supp}(\nu)\subseteq\{x\in\R^d:|x|\ge r\}$ for some
$r>0$, and $\hat\mu\ge c\,\delta_0$ for some $c>0$, then the
Cohn--Elkies linear programming bound in $\R^d$ is at least
$c\,(r/2)^d$. (Any Cohn--Elkies auxiliary function $f$ pairs with $\mu$
on both sides of the inequality chain, forcing
$f(0)/\hat f(0)\ge c\,r^d$ up to normalization.) Sections 2--3 of
\cite{CT} construct such distributions from modular forms, as follows.

Fix the dimension $d=36$, weight $k=d/2=18$, and level $N=24$. For a
holomorphic modular form $g\in M_k(\Gamma_0(N))$ write
$g=\sum_{n\ge0}a_n q^n$ ($q=e^{2\pi iz}$) and let
\[
\tilde g \;=\; i^{k}\,\bigl(g\big|_k w_N\bigr),
\qquad
w_N=\begin{pmatrix}0&-1\\ N&0\end{pmatrix},
\qquad
(f|_k M)(z)=(\det M)^{k/2}(cz+d)^{-k}f(Mz),
\]
be its (normalized) Fricke transform, i.e.\
$\tilde g(z)=i^kN^{-k/2}z^{-k}g(-1/(Nz))$; for our even weight $k=18$
the factor is $i^{18}=-1$. Then
$\tilde g\in M_k(\Gamma_0(N))$ as well, and
$\tilde{\tilde g}=g$. Write $\tilde g=\sum_{n\ge0}b_nq^n$. Given $g$
satisfying the coefficient conditions below, the construction of
\cite[\S\S2--3]{CT} produces a distribution $\mu$ as above with
$r=\sqrt T$ and $c=(2/\sqrt N)^{d/2}\,b_0$. Concretely: from any $g$
satisfying
\begin{equation}\label{eq:feas}
a_0=1,\qquad a_n=0\ (1\le n\le T-1),\qquad a_n\ge 0\ (n\ge T),
\qquad b_n\ge 0\ (n\ge 0),
\end{equation}
a dual-feasible point for the Cohn--Elkies program with value (as a
center density)
\begin{equation}\label{eq:value}
B(g)\;=\;b_0\cdot\Bigl(\tfrac{2}{\sqrt N}\Bigr)^{d/2}\cdot
\Bigl(\tfrac{\sqrt T}{2}\Bigr)^{d},
\end{equation}
so that $\delta_{\LP}(d)\ge B(g)$. The implication that \eqref{eq:feas}
yields a dual-feasible distribution of value \eqref{eq:value} is
\cite[Prop.~2.1 and \S\S2--3]{CT}, invoked here as a black box; it is the
one load-bearing input the present paper does not reprove, and the $d=12$
reproduction below calibrates our implementation of the interface
\eqref{eq:feas}--\eqref{eq:value}. Here $T$ is a free integer parameter
(the cutoff index: the $T-1$ coefficients $a_1,\dots,a_{T-1}$ are forced
to vanish). We take $T=10$. As a consistency
check of the normalization, our implementation of
\eqref{eq:feas}--\eqref{eq:value} reproduces the published $d=12$
values of \cite{CT} at both levels: $0.059782=1.6141\times$
Coxeter--Todd at $N=24$ (\cite[Table~6.1]{CT}) and
$0.0624462=1.68605\times$ at $N=96$ (\cite[Thm.~1.1]{CT}), to the
displayed precision.

\section{The certificate}\label{sec:construction}

\subsection{The space, the search subspace, and the reduced LP}
\label{sec:space}

$\dim M_{18}(\Gamma_0(24))=72$, with an $8$-dimensional Eisenstein
subspace spanned by $E_{18}(\delta z)$, $\delta\mid 24$, where
\[
E_{18}(z)=1+\kappa_{18}\sum_{n\ge1}\sigma_{17}(n)q^n,
\qquad
\kappa_{18}=-\frac{2k}{B_{18}}\Big|_{k=18}=-\frac{28728}{43867},
\]
and a $64$-dimensional cuspidal subspace. Candidate forms are the eight
Eisenstein series together with the holomorphic eta-quotients
$\prod_{\delta\mid24}\eta(\delta z)^{r_\delta}$ of weight $18$ and
level dividing $24$ (Ligozat's criteria) with exponent bound
$|r_\delta|\le8$: there are $3044$ such eta-quotients, and the $3052$
candidates span all of $M_{18}(\Gamma_0(24))$ --- an exact certificate:
the candidate coefficient matrix truncated at $q^{100}$ has rank $72$
modulo the prime $10^9+7$, hence exact rank $\ge72$, and $=72$ by the
dimension formula (\texttt{d36\_span\_rank\_certify.py} and its receipt
in the ancillary files; at $|r_\delta|\le6$ only $44$ eta-quotients
exist, which under-span). All
$q$-expansions used in the construction and verification are computed
in exact integer/rational arithmetic.

The linear program is \emph{not} solved over the full $72$-dimensional
space but over an explicit $29$-dimensional subspace $V$, chosen as
follows. The LP window below constrains the coefficients
$a_0,\dots,a_{28}$; column-pivoted QR factorization, applied to the
\emph{transpose} of the floating-point candidate coefficient matrix
truncated at $q^{28}$ (i.e.\ to the $29\times3052$ matrix whose
columns are the candidates' truncated coefficient vectors, so that the
pivoting selects candidates), selects $29$ pivot forms --- the
truncation has only $29$ coefficients $a_0,\dots,a_{28}$, so at most
$29$ candidates are linearly independent in this window. The selected forms are $7$ Eisenstein series
($E_{18}(\delta z)$, $\delta\in\{1,2,3,4,6,8,24\}$) and $22$
eta-quotients; the full list, in pivot order (which fixes the
coordinate system used everywhere, including the published
certificate), is in the ancillary file
\texttt{ancillary/certificate\_exact\_data.txt}, and $V$ is the span
of these $29$ forms. These $29$ forms are linearly independent over
$\mathbb{Q}$: the exact rational coefficient matrix
$\bigl(a_n(f_j)\bigr)_{0\le n\le100,\,1\le j\le29}$ has rank $29$,
certified by an exact mod-$p$ rank computation at $p=10^9+7$ (a mod-$p$
rank lower-bounds the $\mathbb{Q}$-rank, so rank $29$ is exact); the
driver re-runs this as gate G1b (Table~\ref{tab:tcb}). This pivot selection is the one floating-point
step in the construction: it only decides \emph{which} forms to use,
and everything downstream treats the selected forms exactly.
Restricting the search to $V\subsetneq M_{18}(\Gamma_0(24))$ can only
weaken the bound we are able to find, never its validity:
Theorem~\ref{thm:main} requires exhibiting one dual-feasible form, and
feasibility of the certificate is verified unconditionally
(\S\ref{sec:tail}).

Within $V$, the $9$ homogeneous conditions $a_1=\dots=a_9=0$ cut out a
nullspace of exact dimension $r=29-9=20$: in exact rational arithmetic,
the $9\times29$ coefficient matrix of $(a_1,\dots,a_9)$ has rank $9$,
and the $10\times29$ matrix of $(a_0,a_1,\dots,a_9)$ has rank $10$ ---
so $a_0$ is not identically zero on the nullspace and the normalized
family is nonempty (both ranks are re-certified by the verification
driver). The LP variables are
the $r=20$ nullspace coordinates, with the normalization $a_0=1$
imposed as an equality constraint of the program (the feasible forms
thus lie in a $19$-dimensional affine slice of the nullspace); in these
variables we solve
\begin{equation}\label{eq:lp}
\max\ b_0
\quad\text{s.t.}\quad
a_n\ge0\ (10\le n\le 28),\qquad
b_n\ge0\ (n\in W),
\end{equation}
with a working set $W$ of Fricke-side rows, in exact rational
arithmetic. All optimality statements in this section are relative to
$V$ and report the optimum returned by the exact rational simplex over
$V$: we make no claim about the optimum of \eqref{eq:lp} over the full
space $M_{18}(\Gamma_0(24))$, and none is needed for
Theorem~\ref{thm:main}, which requires only feasibility of the exhibited
vertex, verified unconditionally in \S\ref{sec:tail}.

\subsection{Constraint generation}

With $W=\{1,\dots,28\}$ the exact rational simplex returns for
\eqref{eq:lp} (over $V$) a vertex of value $36.3470\times 2^{18}/3^{10}$,
but its Fricke expansion has $45$
negative coefficients in $1\le n\le 300$, all in the classes
$n\equiv3,9,15,21\  (\mathrm{mod}\ 24)$, the first at $n=33$: the
unconstrained vertex overshoots into infeasibility, and its Eisenstein
data forces $b_n<0$ for infinitely many $n$, so no finite verification
can rescue it: for that vertex the binding class-$3$ Eisenstein
combination $r_3=e_1+e_3/(1+3^{17})$ is \emph{positive}
($\approx+4.3\times10^{-13}$; exact value in the construction receipt),
so for $n=3p$, $p>3$ prime, the Eisenstein term equals
$\kappa_{18}\,r_3\,(1+3^{17})\,\sigma_{17}(p)<0$ (recall
$\kappa_{18}<0$), of size $\asymp n^{17}$, while the cuspidal part is
$O(\sigma_0(n)\,n^{17/2})$ by Deligne's bound --- so $b_n\to-\infty$
along that class. Adding all $45$ violated rows to $W$
($|W|=73$) and re-solving exactly gives the vertex returned for the
enlarged program over $V$, with \emph{no} violations: re-scanning
exactly, $a_n\ge0$ and $b_n\ge0$ for all $1\le n\le 800$ (zero negative
coefficients on either side, including the un-enforced range
$300<n\le800$). The cutting-plane process converges in one round (a
statement about this initialization and violation scan, not a general
convergence claim).
Exactly,
\begin{equation}\label{eq:exactB}
B \;=\; b_0\cdot\frac{5^{18}}{2^{27}\,3^{9}},
\qquad
b_0=\frac{p}{q}\ \ \text{(exact rational, in lowest terms)},
\end{equation}
where $\tfrac{5^{18}}{2^{27}3^{9}}=\tfrac{3814697265625}{2641807540224}$
is the exact value of $(2/\sqrt{24})^{18}(\sqrt{10}/2)^{36}$ and
{\small
\begin{align*}
p={}&95066\,91735\,60589\,08133\,22373\,57126\,88188\,23298\,71412\,02030\,77849\,06752\,56657\\
&93709\,84887\,76440\,11565\,11331\,34854\,11947\,23969\,08932\,10895\,90633\,472,\\
q={}&93956\,57537\,80418\,71093\,01784\,65357\,83534\,26840\,65696\,00550\,96336\,00075\,25366\\
&95749\,03464\,45951\,04757\,46960\,28937\,05377\,61585\,96625\,88923\,78832\,3,
\end{align*}}%
($123$ and $121$ digits; the fraction for $B$ itself, and outward
decimal enclosures $b_0\in[101.1817608012,\,101.1817608013]$,
$B\in[146.1036734821,\,146.1036734822]$, are in the ancillary file).
Exact rational arithmetic in the solve
is essential: at this conditioning, floating-point and even
$300$-digit fixed-precision simplex return phantom vertices (violating
$a_0=1$) once rows with coefficients of size $10^{30}$ enter the basis.

\begin{remark}
The certificate data --- the explicit list of the $29$ basis forms, the
$20$ exact rational coordinates of the vertex in the (pivot-ordered)
reduced basis, the coordinate-to-form map, the exact rational $b_0$ and
$B$, the exact Eisenstein coefficients, and all constants below --- are
in the ancillary file \texttt{ancillary/certificate\_exact\_data.txt};
the construction and verification receipts are
\texttt{ancillary/receipt\_d36\_cutgen.txt} and
\texttt{ancillary/receipt\_d36\_cs.txt}, and the single-driver
verification is \texttt{ancillary/code/verify\_certificate.py}
(\S\ref{sec:discussion}).
\end{remark}

\section{Eventual positivity}\label{sec:tail}

It remains to prove $a_n\ge0$ and $b_n\ge0$ for all $n>800$. Decompose
both forms into Eisenstein and cuspidal parts; we treat $\tilde g$ (the
binding side) and note the same argument for $g$ with its own constants.

\subsection{Eisenstein main term, per residue class}

Write the Eisenstein component of $\tilde g$ as
$\sum_{\delta\mid24}e_\delta\,E_{18}(\delta z)$. The exact rational
$e_\delta$ (computed by an exact projector, validated as explained
below) have the numerical values
\[
\begin{array}{ll}
e_1=-4.197922\cdot10^{-13}, & e_2=-2.023980\cdot10^{-14}, \\
e_3=+5.361610\cdot10^{-5}, & e_4=+5.776815\cdot10^{-3}, \\
e_6=-18.24675, & e_8=-27.55171, \\
e_{12}=+43.87748, & e_{24}=+103.0969.
\end{array}
\]
For $n\ge1$ the Eisenstein coefficient is
$\kappa_{18}\sum_{\delta\mid\gamma}e_\delta\,\sigma_{17}(n/\delta)$,
where $\gamma=\gcd(n,24)$. Using
$m^{17}\le\sigma_{17}(m)<\zeta(17)\,m^{17}$ together with the rational
bound $\zeta(17)<1+2^{-17}+2^{-16}/16$ (compare the sum with
$\int_2^\infty x^{-17}dx$), and bounding each term from below according
to its sign (note $\kappa_{18}<0$, so $\kappa_{18}e_\delta>0$ exactly
when $e_\delta<0$), we get for every $n\ge1$ in the class $\gamma$:
\begin{equation}\label{eq:guard}
\text{Eisenstein}(n)\;\ge\;c_E(\gamma)\,n^{17},
\qquad
c_E(\gamma)\;=\;
\sum_{\substack{\delta\mid\gamma\\ e_\delta<0}}
\frac{\kappa_{18}e_\delta}{\delta^{17}}
\;+\;
\Bigl(1+2^{-17}+\tfrac{2^{-16}}{16}\Bigr)
\sum_{\substack{\delta\mid\gamma\\ e_\delta>0}}
\frac{\kappa_{18}e_\delta}{\delta^{17}},
\end{equation}
an exact rational number for each $\gamma$. Evaluating exactly: all
sixteen constants (eight classes, both sides) are \emph{positive},
with both minima on the class $\gamma=3$:

\begin{center}
\begin{tabular}{@{}lcccccccc@{}}
\toprule
$\gamma$ & 1 & 2 & 3 & 4 & 6 & 8 & 12 & 24\\
\midrule
$c_E(\gamma)$, $\tilde g$-side ($\times10^{-13}$) &
$2.749$ & $2.749$ & $\mathbf{0.0301}$ & $0.547$ & $7.089$ & $0.627$ &
$4.887$ & $4.967$\\
$c_E^{(a)}(\gamma)$, $g$-side ($\times10^{-11}$) &
$2.555$ & $4.731$ & $\mathbf{0.506}$ & $2.921$ & $2.682$ & $2.921$ &
$0.873$ & $0.873$\\
\bottomrule
\end{tabular}
\end{center}

\begin{equation}\label{eq:cE}
c_E \;:=\; \min_{\gamma\mid24}c_E(\gamma)\;=\;c_E(3)
\;=\;3.0197202\cdot10^{-15}\quad(\tilde g\text{-side}),
\end{equation}
and analogously $c_E^{(a)}=c_E^{(a)}(3)=5.0672217\cdot10^{-12}$ for
$g$ (again the minimum over all eight classes). All sixteen constants
are the $\zeta(17)$-guarded floors of \eqref{eq:guard} --- hence slightly
below the raw leading Eisenstein coefficient
$\kappa_{18}\sum_{\delta\mid\gamma}e_\delta/\delta^{17}$ of each class
(the quantity tabulated in the verification receipt) --- and are exact
rationals; the table and \eqref{eq:cE} display truncations,
and the exact numerators/denominators are printed in the ancillary
file \texttt{ancillary/guarded\_cE.txt} (output of
\texttt{code/d36\_guarded\_cE.py}).
The thinness of these constants --- forced by the active constraints at
the optimal vertex of \eqref{eq:lp} --- is what makes the tail argument
in dimension $36$ delicate: the certificate value $b_0\approx101.2$
exceeds the per-class Eisenstein floor constants $c_E$ by factors of
$\approx2\times10^{13}$ ($g$-side) and $\approx3\times10^{16}$
($\tilde g$-side).

\subsection{The cuspidal remainder and a lift-aware Deligne constant}

Let $S=\tilde g-\text{(Eisenstein part)}=\sum c_nq^n\in
S_{18}(\Gamma_0(24))$, a space of dimension $64$. The newform/oldform
basis consists of the lifts $f(ez)$, where $f$ runs over the normalized
newforms of level $M\mid24$ and $e\mid 24/M$; the newspace dimensions
are
\[
(\dim S^{\mathrm{new}}_{18}(\Gamma_0(M)))_{M\mid24}
=(1,1,3,2,3,4,2,9)\ \text{ for }M=(1,2,3,4,6,8,12,24),
\]
with lift multiplicities $(8,6,4,4,3,2,2,1)$, summing to $64$. We
compute the exact coordinates $\lambda_{f,e}$ of $S$ in this basis as
follows. In a reduced echelon basis of $S_{18}(\Gamma_0(24))$ with
distinct leading $q$-exponents (exact rational coefficients) we form the
exact Hecke matrices $T_5,T_7$ at the good primes and $U_2,U_3$ at the
bad primes $2,3$; a generic combination $T_5+\gamma T_7$ splits the
space into common Hecke eigenspaces --- one per Galois orbit of
newforms --- and within each orbit $U_2,U_3$ and the leading
$q$-exponents pick out the individual lifts $f(ez)$. This eigenspace
split is the only floating-point step of the decomposition; its output
is not trusted but \emph{validated exactly}: the
reconstruction $\sum\lambda_{f,e}f(ez)$ agrees with $S$
coefficient-by-coefficient, as exact rationals, for all $n\le 800$
(deviation identically zero; the Sturm bound for
$M_{18}(\Gamma_0(24))$ is $72$, so agreement to $800$ is a highly
overdetermined identity). This validation is an identity between
\emph{rational} objects --- each Galois-orbit contribution
$\sum_{f\in\text{orbit}}\lambda_{f,e}f(ez)$ has rational coefficients
and is computed by exact rational linear algebra --- so it is
independent of the interval layer described below, which enters only
through the absolute values $|\lambda_{f,e}|$ of the individual
conjugates inside a multi-dimensional orbit. As a further check, the level-one component is
the unique normalized cusp form $\Delta E_6$ of weight 18, and the computed
eigenvalues match $a_2=-528$, $a_3=-4284$, $a_5=-1025850$,
$a_7=3225992$ exactly.

\begin{lemma}[lift-aware Deligne constant]\label{lem:liftaware}
Let $S=\sum_{f,e}\lambda_{f,e}\,f(ez)\in S_k(\Gamma_0(N))$ with $f$
normalized newforms (each Galois orbit expanded into its individual
complex embeddings, so that every $|\lambda_{f,e}|$ is the modulus of a
single complex coordinate). Then for all $n\ge1$,
\[
|c_n|\;\le\;C_w\,\sigma_0(n)\,n^{(k-1)/2},
\qquad
C_w\;=\;\sum_{f,e}\frac{|\lambda_{f,e}|}{e^{(k-1)/2}}.
\]
\end{lemma}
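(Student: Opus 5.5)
The plan is to bound each lift separately by Deligne's bound and then sum with the triangle inequality. Write $f=\sum_{m\ge1}a_f(m)q^m$ for a normalized newform of level $M\mid N$. For every $m\ge1$ Deligne's theorem gives $|a_f(m)|\le\sigma_0(m)\,m^{(k-1)/2}$. At primes $p\nmid M$ this is Ramanujan--Petersson; at primes $p\mid M$ it follows from the known values $|a_f(p)|\in\{0,p^{(k-2)/2}\}$, which give $|a_f(p^j)|\le p^{j(k-1)/2}$. Multiplicativity of the newform coefficients then assembles the global bound. The lift $f(ez)$ has coefficients
\[
[q^n]\,f(ez)=\begin{cases}a_f(n/e), & e\mid n,\\ 0,& e\nmid n.\end{cases}
\]
Hence
\[
|c_n|\;\le\;\sum_{f,e}|\lambda_{f,e}|\,\bigl|[q^n]f(ez)\bigr|\;\le\;\sum_{\substack{f,e\\ e\mid n}}|\lambda_{f,e}|\,\sigma_0(n/e)\,(n/e)^{(k-1)/2}.
\]

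Next I would factor out $n^{(k-1)/2}$. This leaves the weight $(n/e)^{(k-1)/2}=n^{(k-1)/2}e^{-(k-1)/2}$, which is exactly the lift-aware discount. I would also use $\sigma_0(n/e)\le\sigma_0(n)$ for $e\mid n$, since every divisor of $n/e$ is a divisor of $n$. Dropping the restriction $e\mid n$ only enlarges the sum, and this yields $|c_n|\le n^{(k-1)/2}\sigma_0(n)\sum_{f,e}|\lambda_{f,e}|e^{-(k-1)/2}=C_w\,\sigma_0(n)\,n^{(k-1)/2}$.

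Two points need care. First, the expansion must be over individual complex embeddings. Deligne's bound holds for each embedding separately, and the triangle inequality is applied to complex coefficients, so there is no issue as long as the $\lambda_{f,e}$ are the coordinates with respect to genuine eigenforms, as the statement stipulates. Second, the decomposition $S=\sum\lambda_{f,e}f(ez)$ must exist, which is the Atkin--Lehner--Li theory already invoked in the paper.

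The main obstacle is the bad-prime case of the coefficient bound, namely $p\mid M$ for $M\mid24$. There I would use that $a_f(p)^2=p^{k-2}$ or $a_f(p)=0$ for newforms with trivial character. This gives $|a_f(p^j)|=|a_f(p)|^j\le p^{j(k-2)/2}\le\sigma_0(p^j)p^{j(k-1)/2}$, so the uniform bound $|a_f(m)|\le\sigma_0(m)m^{(k-1)/2}$ holds for all $m$.
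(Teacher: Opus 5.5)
Your proposal is correct and follows the paper's proof essentially step for step. The paper likewise uses the divisor-form bound $|a_f(m)|\le\sigma_0(m)\,m^{(k-1)/2}$: Deligne plus the Hecke recursion at good primes, and $a_f(p^j)=a_f(p)^j$ with $|a_f(p)|\in\{0,p^{(k-2)/2}\}$ at primes dividing the level (the paper splits these into the cases $p\,\|\,M$ and $p^2\mid M$), assembled by multiplicativity. It then combines the lift-coefficient formula, $\sigma_0(n/e)\le\sigma_0(n)$, and $(n/e)^{(k-1)/2}=n^{(k-1)/2}e^{-(k-1)/2}$ through the triangle inequality.
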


\begin{proof}
By the divisor bound $|a_f(m)|\le\sigma_0(m)m^{(k-1)/2}$, valid for
every $m\ge1$ and each normalized newform --- at the unramified primes
$p\nmid M$, Deligne's estimate and the Hecke recursion give
$|a_f(p^r)|\le(r+1)\,p^{r(k-1)/2}$; at the ramified primes $p\mid M$ we
distinguish two cases. If $p\,\|\,M$ (exactly divides the level), the
local newform bound $|a_f(p)|\le p^{(k-2)/2}$ with $a_f(p^r)=a_f(p)^r$
(the local Euler factor at a prime exactly dividing the level, for a
newform of trivial character; see e.g.\ \cite[\S5.8]{DiamondShurman})
gives $|a_f(p^r)|\le p^{r(k-2)/2}\le\sigma_0(p^r)\,p^{r(k-1)/2}$; if
$p^2\mid M$, then (trivial character) $a_f(p)=0$, so $a_f(p^r)=0$ and the
bound holds a fortiori. Both cases occur here, as the newform levels
$M\mid 24$ include $M\in\{4,8,12,24\}$ with $2^2\mid M$. The
bound for general $m$ follows by multiplicativity. The coefficient of $q^n$ in $f(ez)$ is $a_f(n/e)$
if $e\mid n$ and $0$ otherwise, and for $e\mid n$,
$\sigma_0(n/e)\le\sigma_0(n)$ and
$(n/e)^{(k-1)/2}=n^{(k-1)/2}e^{-(k-1)/2}$. Summing over the pairs
$(f,e)$ with $e\mid n$ gives the claim.
\end{proof}

The point of the lemma is quantitative. The na\"ive constant
$\sum_{f,e}|\lambda_{f,e}|$ equals $1.894\times10^{10}$ for our $S$:
it is dominated ($99.7\%$) by the single coordinate
$\lambda_{\Delta E_6,\,24}\approx-1.89\times10^{10}$ on the deepest
oldform lift, and with it the verification over $n\le800$ fails: under
the uniform inequality actually used below
($\sigma_0(n)\le2\sqrt n$, crossover $(2C/c_E)^{1/8}$) the na\"ive
crossover is $n_0\approx1372$; even discarding the divisor factor
altogether --- which gives a valid \emph{lower} bound on the crossover,
since $\sigma_0(n)\ge1$ --- positivity cannot be established below
$(C/c_E)^{2/17}\approx827>800$. The lift-aware constant suppresses the
deep-lift term by $24^{17/2}\approx5.4\times10^{11}$:
\begin{equation}\label{eq:Cw}
C_w \;\le\; 0.358807778339 \quad(\tilde g),
\qquad
C_w^{(a)} \;\le\; 0.306329333530 \quad(g),
\end{equation}
certified as upper bounds by outward-rounded interval arithmetic, as
follows.

\emph{Exact part.} The coordinates on rational (one-dimensional Galois
orbit) blocks, including the dominant level-one tower, are exact
rational numbers ($31$ of the $64$ lifts); their contribution to $C_w$
is exact up to the outward-rounded enclosure of $e^{-17/2}$.

\emph{Interval part (the $33$ algebraic coordinates).} The coordinates
themselves are exact algebraic numbers, specified by exact polynomial
and embedding data; certified real intervals are used only to bound
their absolute values for $C_w$. The residual
$R=S-(\text{rational-block part})$ is itself an exact rational
$q$-expansion lying in the $33$-dimensional span of the
multi-dimensional orbits. Each Hecke eigenvalue of such an orbit is
enclosed by an exact isolating interval with rational endpoints,
produced by a Sturm-sequence root-isolation computation for the exact
characteristic polynomial (isolation width
$\le10^{-40}$; obtaining one interval per conjugate certifies that the
degree-$d$ factor has $d$ simple real roots and fixes each real
embedding unambiguously --- not to be confused with the Sturm bound for
modular forms used above), the $33$
lift $q$-vectors are propagated as outward-rounded interval vectors,
and the coordinates are obtained from a sound interval Gaussian
elimination (\texttt{mpmath.iv} throughout, \texttt{mpmath} version
pinned in the ancillary \texttt{requirements.txt}) of the $33\times33$
system on $33$ determining rows drawn from $n\in\{117,\dots,170\}$ (the
exact row list is printed in the verification receipt); the rows are
chosen by a floating-point pivoting pass, which affects only
\emph{which} rows are used, not the soundness of the interval solve. A
pivot interval straddling zero would make the enclosures blow up and
the certificate comparison fail; the realized enclosure widths
$\sim10^{-50}$ therefore also certify the nonsingularity of the chosen
system, and the verification receipt records the minimal pivot
magnitude (bounded away from zero) together with dyadic-rational
outward enclosures for both constants \eqref{eq:Cw}. Cross-checks: the
bound is stable to $15$ digits across four disjoint determining
windows; artificially degrading the working precision widens the
enclosure without ever breaking the bound (confirming outward
rounding); the interval coordinates enclose the coordinates
computed by the independent second implementation
(\S\ref{sec:discussion}), which reproduces \eqref{eq:Cw} to all
displayed digits; and the $33\times33$ interval solve together with
the $|\lambda_{f,e}|$-aggregation is re-verified independently in Arb
ball arithmetic (\texttt{python-flint}), consuming the exported exact
dyadic endpoints (\texttt{d36\_arb\_check.py}; Arb's certified ball
solve fails on a singular ball matrix, so its success certifies
nonsingularity independently of the pivot log, and its exact dyadic
upper bound on the algebraic-remainder sum agrees with the
\texttt{mpmath.iv} value to all displayed digits --- the rational-block
part of \eqref{eq:Cw} is exact and common to both computations). In
particular the theorem-critical interval \emph{solve} does not rest on
the \texttt{mpmath.iv} module alone: Arb ball arithmetic (a library
built for rigorous enclosure) independently re-certifies the linear
solve, the nonsingularity of the $33\times33$ system, and the final
$|\lambda_{f,e}|$-aggregation, starting from the exported dyadic matrix
enclosures --- it does \emph{not} rebuild those enclosures, whose
construction (exact Sturm root-isolation and \texttt{mpmath.iv} interval
propagation) is instead corroborated by the independent high-precision
and PARI reproductions (\S\ref{sec:discussion}).

\subsection{Closing the certificate}

Combining \eqref{eq:cE}, Lemma~\ref{lem:liftaware}, \eqref{eq:Cw} and
$\sigma_0(n)\le2\sqrt n$: for $n$ on any residue class,
\[
b_n\;\ge\;c_E\,n^{17}-2\,C_w\,n^{9}\;>\;0
\qquad\text{whenever } n^{8}>\frac{2C_w}{c_E},
\]
i.e.\ for all $n\ge n_0$, where $n_0=63$ is the smallest integer with
$c_E\,n^8>2C_w$ --- certified by an exact rational comparison, with
$C_w$ replaced by its certified upper bound \eqref{eq:Cw}; on the
$g$-side, $n_0^{(a)}=25$. Since $a_n\ge0$ and $b_n\ge0$ have
been verified exactly for $1\le n\le800$, both hold for all $n$, and
$(g,\tilde g)$ satisfies \eqref{eq:feas}. Theorem~\ref{thm:main}
follows from \eqref{eq:value} and \eqref{eq:exactB}. \qed

The margins are enormous ($2C_w/c_E$ would have to exceed its actual
value by a factor $7\times10^{8}$ before $n_0$ reached $800$); the
certificate is in no sense borderline once the correct constant is used.
The role of the verification window $n\le800$ (rather than, say,
$n\le63$): the window was fixed before the tail constants were
computed, and with the na\"ive unweighted constant it is insufficient
--- the uniform crossover is $n_0\approx1372$, and even the
divisor-free lower bound on the crossover, $(C/c_E)^{2/17}\approx827$,
already exceeds $800$ --- which is what motivated
Lemma~\ref{lem:liftaware}; retaining the full window makes the exact
reconstruction identity of \S\ref{sec:tail} overdetermined by an order
of magnitude beyond the Sturm bound $72$, and a second implementation
independently extends the exact sign scans to $n\le1100$.

\begin{table}[htbp]
\centering\small
\begin{tabular}{@{}llll@{}}
\toprule
Quantity & Display value & Arithmetic type & Recovered from \\
\midrule
$T$ & $10$ & exact integer & \S\ref{sec:dual} \\
$b_0$ & $101.1817608012\ldots$ & exact rational ($123/121$ digits) & \texttt{certificate\_exact\_data.txt} \\
$B$ & $146.1036734821\ldots$ & exact rational, $b_0\,5^{18}/(2^{27}3^9)$ & \texttt{certificate\_exact\_data.txt} \\
$c_E\ (\tilde g)$ & $3.0197202\cdot10^{-15}$ & exact rational (guarded floor) & \texttt{guarded\_cE.txt} \\
$c_E^{(a)}\ (g)$ & $5.0672217\cdot10^{-12}$ & exact rational (guarded floor) & \texttt{guarded\_cE.txt} \\
$C_w\ (\tilde g)$ & $\le0.358807778339$ & certified interval upper bound & \texttt{receipt\_d36\_cs.txt} \\
$C_w^{(a)}\ (g)$ & $\le0.306329333530$ & certified interval upper bound & \texttt{receipt\_d36\_cs.txt} \\
$n_0\ (\tilde g)$ & $63$ & exact integer crossover & \texttt{guarded\_cE.txt} \\
$n_0^{(a)}\ (g)$ & $25$ & exact integer crossover & \texttt{guarded\_cE.txt} \\
\bottomrule
\end{tabular}
\caption{Proof-critical quantities: arithmetic type and the ancillary file
from which each is recovered. Decimal displays are truncations toward zero
(\S\ref{sec:intro}); the $C_w$ rows are certified outward-rounded upper
bounds.}
\label{tab:critical}
\end{table}

\section{Discussion}\label{sec:discussion}

\paragraph{Strict non-sharpness in dimension 36 does not follow from current upper bounds.}
Our dual point shows $\delta_{\LP}(36)\ge32.91\,\delta_*$ (recall
$\delta_*$ denotes the best \emph{known} packing density, not the true
optimum $\delta_{\max}$), and the best available \emph{numerical}
two-point upper bound is $\approx52.2\,\delta_*$
\cite{CohnElkies,CT,CohnTable} --- a numerical estimate: outside
$d\in\{1,8,24\}$ the exact optimum of the Cohn--Elkies program is not
known, and computed values are approximations from above
\cite{CT}. Using our present dual point, strict non-sharpness would
need an independent upper bound below $32.91\,\delta_*$, i.e.\ an improvement of
$\approx37\%$ over the two-point bound. For comparison, the only general-purpose
technology quantified to improve on the two-point bound in this
range --- the three-point bounds of \cite{CdLS} --- gains $4.85\%$ at $d=12$ and
$0.79\%$ at $d=16$, and no three-point computation has been reported in
any dimension beyond $16$ (cf.\ the computational range of \cite{CdLS};
survey \cite{VallentinSurvey}). The paradox of
dimension $36$ is that the \emph{weakness} of the known record makes
the dual statement strong and the strict statement inaccessible: both
our dual value and the two-point bound are large multiples of the
record precisely because the record is weak; the residual gap between
them is plausibly due largely to our restricted search subspace and the
choice $T=10$, although the exact value of $\delta_{\LP}(36)$ is not
known.

\paragraph{Dimension 40.} The same pipeline reaches $d=40$
(weight $20$) in principle, with two new costs: the weight-$20$
eta-quotients of level $24$ span only a $72$-dimensional subspace of the
$80$-dimensional $M_{20}(\Gamma_0(24))$, so the basis must be augmented
(e.g.\ by $E_4\cdot(\text{weight-16 forms})$), and the exact arithmetic
is heavier ($\sigma_{19}$ growth). We have not completed a $d=40$
certificate. We note that Remark~4.3 of \cite{Zhou} asserts that for
$d=32$ and $d=40$ ``LP non-sharpness is established by other means'',
attributing both to Cohn--Triantafillou; but \cite{CT} treats only
$d\in\{12,16,20,28,32\}$ and does not address $d=40$ (and \cite{Zhou}'s
own Table~1 marks the status at $d=40$ as ``expected''). As of July 2026
we have been unable to locate any published dual bound or non-sharpness
proof in dimension $40$. The
modular-bootstrap analysis of \cite{AJCHdLT} (building on the sphere
packing--quantum gravity correspondence of \cite{HMR}) rules out
\emph{sharpness}
of the LP bound for a range of dimensions by an implied-kissing-number
criterion, but reports its first crossings of that criterion only near
$d\approx180$ and, in its authors' own words, ``does not give a
quantitative improvement''; it therefore neither contains nor implies a
dual bound at $d=36$ or $40$.

\paragraph{Methodological remarks.} Two ingredients may be reusable.
(i) Constraint generation appears to be the right formulation of the
exact dual LP whenever eventual positivity must be enforced through a
crossover: it keeps every exact solve small (here: one re-solve with
$73$ rows) while the dense exact LP is intractable and floating-point
LP is unsound at these conditionings. (ii) Lemma~\ref{lem:liftaware} is
elementary, but the failure of the unweighted constant in dimension
$36$ suggests it should be the default bookkeeping in
eventual-positivity verifications: deep oldform lifts are exactly the
coordinates the unweighted $\ell^1$ norm overweights, by the factor
$e^{(k-1)/2}$ that the lemma restores.

\paragraph{Computer-assisted proof and the trusted computing base.}
Table~\ref{tab:tcb} lists the rigorous pipeline behind
Theorem~\ref{thm:main}: each stage, the program that performs it, the
arithmetic model, and the certified output. Two points deserve emphasis.
First, floating point enters at exactly two places --- the construction
heuristics (\S\ref{sec:space}, pivot-basis selection) and the eigenspace
split (\S\ref{sec:tail}) --- and neither is trusted: each is validated
downstream by an exact or interval gate (respectively the exact
feasibility scan and the exact reconstruction identity). Second, the
driver \texttt{verify\_certificate.py} runs the exact-rational stages end
to end but takes the two $C_w$ upper bounds as \emph{separately
certified} inputs; their interval certification is the job of
\texttt{d36\_cs\_certificate.py} (re-verified in Arb by
\texttt{d36\_arb\_check.py}), so ``one command verifies the certificate''
holds only once these two scripts have both been run. Pinned library
versions and SHA-256 checksums of the data files are in
\texttt{requirements.txt} and \texttt{SHA256SUMS}.

\begin{table}[htbp]
\centering\small
\begin{tabular}{@{}llll@{}}
\toprule
Stage & Program & Arithmetic & Certified output / gate \\
\midrule
Reduce $g,\tilde g$ & \texttt{verify\_certificate} & exact $\mathbb{Q}$ & $a_0{=}1,\ a_{1\ldots9}{=}0$; ranks $29,10$ \\
Feasibility scan & \texttt{verify\_certificate} & exact $\mathbb{Q}$ & $a_n,b_n\ge0\ (1\le n\le800)$ \\
Eisenstein floor & \texttt{d36\_guarded\_cE} & exact $\mathbb{Q}$ & $c_E(\gamma)>0$ (16/16); $n_0{=}63,25$ \\
Newform decomp. & \texttt{d36\_cs\_certificate} & exact $\mathbb{Q}$ (split: float) & reconstruction dev.\ $=0$ to $800$ \\
Enclose $C_w$ & \texttt{d36\_cs\_certificate} & interval (\texttt{mpmath.iv}) & $C_w\le\eqref{eq:Cw}$; min pivot $>0$ \\
Re-certify $C_w$ & \texttt{d36\_arb\_check} & Arb ball arithmetic & same bound; system nonsingular \\
\bottomrule
\end{tabular}
\caption{The trusted computing base for Theorem~\ref{thm:main}. Program
names are abbreviated (drop the \texttt{.py}); the full file map, with
primary-versus-derived inputs, is \texttt{ancillary/MANIFEST.md}.}
\label{tab:tcb}
\end{table}

\paragraph{Reproducibility.} The exact verification --- the rigorous part
of the certificate --- uses only exact rational linear algebra, integer
$q$-expansions of eta-quotients, and interval arithmetic (Python:
\texttt{sympy}, \texttt{mpmath}, \texttt{fractions}); no computer-algebra
system with modular-forms machinery is required. Floating-point routines
(\texttt{numpy}, \texttt{scipy}) are used only in the \emph{construction}
step, as heuristics to select the pivot basis (\S\ref{sec:space}) and
to locate the candidate vertex; they play
no role in the exact rigor pass, and every theorem-critical constant
reported here is either recomputed exactly or bounded by a certified
outward enclosure (the labeled numerical estimates of this section are
the only exceptions). The full pipeline, receipts, and the exact
certificate are provided as ancillary files, together with an
independent second implementation of the newform decomposition
(high-precision simultaneous diagonalization) that reproduces the
constants \eqref{eq:Cw} to all reported digits; a third, optional
CAS-based cross-check --- a PARI/GP reproduction
(\texttt{d36\_C\_pari\_indep.py}, via the native modular-symbols
engine, no shared code path) that rebuilds the $64$-form
newform/oldform lift basis from scratch and again reproduces both
constants \eqref{eq:Cw} to all reported digits, along with the
newspace dimension table and the level-one eigenvalues; and an
independent Arb (ball-arithmetic) re-verification of the
theorem-critical interval solve (\texttt{d36\_arb\_check.py},
\S\ref{sec:tail}). The ancillary manifest
separates the files that constitute the \emph{proof} of
Theorem~\ref{thm:main} from exploratory/receipt material, lists SHA-256
checksums of the data files, and the driver
\texttt{verify\_certificate.py} re-runs the exact-data chain ---
reduced-basis reconstruction from the published form list, vertex
feasibility scans, Eisenstein projection, guarded $c_E$, tail
crossover --- from the published exact data alone (with the $C_w$
upper bounds as separately certified inputs), printing
\texttt{VERIFIED} only if every gate passes.

\paragraph{Data and code availability.} The exact certificate data, the
construction and verification receipts, and the complete Python pipeline
(including the independent second implementation) are provided as
ancillary files accompanying this submission; see
\texttt{ancillary/MANIFEST.md} for the file map and reproduction order.

\paragraph*{Acknowledgements.} The construction, the exact-arithmetic
certificate, and the preparation of this manuscript were carried out with
substantial assistance from AI language-model tools: Anthropic's Claude
models (Claude Fable~5 and Claude Opus~4.8) were used for the construction
of the certificate, the verification tooling, and the drafting of the
manuscript, and OpenAI's GPT-5.6 (Codex) was used as an independent
cross-check of the certificate computations. The
correctness of the main result does not rest on trust in those tools: the
certificate is exact and machine-checkable, and every numerical claim has
been reproduced by re-running the exact-arithmetic verification from
scratch (the pipeline and receipts are provided as ancillary files). The
author is accountable for this work and for the decision to publish it.

\subsection*{Mathematics Subject Classification (2020)}
Primary 52C17 (Packing and covering in $n$ dimensions);
Secondary 11F11 (Holomorphic modular forms of integral weight),
11F30 (Fourier coefficients of automorphic forms),
90C05 (Linear programming).

\medskip\noindent\emph{Keywords:} sphere packing, linear programming
bound, Cohn--Elkies bound, dual bound, modular forms, eta-quotients,
eventual positivity, exact arithmetic.

\end{document}